\documentclass[]{svmult}
\usepackage{float} 
\usepackage{graphics}
\usepackage{caption}
\usepackage{subcaption}
\usepackage{import} 
\usepackage{amssymb}
\usepackage[abs]{overpic}
\usepackage[cmyk]{xcolor}
\usepackage{amsmath, amssymb}
\title*{Independence complexes of circle graphs}

\author{Rhea Palak Bakshi \and Ali Guo \and Dionne Ibarra \and Gabriel Montoya-Vega \and Sujoy Mukherjee \and Marithania Silvero \and Jonathan Spreer} 
\authorrunning{Bakshi, Guo, Ibarra, Montoya-V., Mukherjee, Silvero, Spreer} 

\institute{Rhea Palak Bakshi\at Department of Mathematics, University of California, Santa Barbara, USA \\
\email{rheapalak@math.ucsb.edu $|$ rheapalakbakshi@gmail.com} \and 
Ali Guo \at {The George Washington University} \\
\email{hguo30@gwmail.gwu.edu} \and
Dionne Ibarra \at School of Mathematics, Monash University, Australia \\
\email{dionne.ibarra@monash.edu} \and
Gabriel Montoya-Vega \at Department of Mathematics, The Graduate Center CUNY, USA and Department of Mathematics, University of Puerto Rico at R\'io Piedras, PR, USA \\
\email{gabrielmontoyavega@gmail.com} \and Sujoy Mukherjee \at Department of Mathematics, University of Denver, USA \\
\email{sujoymukherjee.math@gmail.com $|$ sujoy.mukherjee@du.edu} \and
Marithania Silvero \at Departamento de Álgebra, Universidad de Sevilla, Spain \\
\email{marithania@us.es} \and 
Jonathan Spreer \at School of Mathematics and Statistics, University of Sydney, Australia\\
\email{jonathan.spreer@sydney.edu.au}}

\begin{document}

\maketitle
\abstract{Independence complexes of circle graphs are purely combinatorial objects. However, when constructed from some diagram of a link $L$, they reveal topological properties of $L$, more specifically, of its Khovanov homology. We analyze the homotopy type of independence complexes of circle graphs, with a focus on those arising when the graph is bipartite. Moreover, we compute (real) extreme Khovanov homology of a $4$-strand pretzel knot using chord diagrams and independence complexes.}

\section{Introduction}
A \emph{chord diagram} is a circle with a finite set of chords with disjoint boundary points. A graph is a \textit{circle graph}, if it is the intersection graph for some chord diagram. Given a simple graph $G$, its independence complex $I(G)$ contains a simplex for each subset of pairwise non-adjacent vertices. Due to work by Gonz\'alez-Meneses, Manch\'on, and one of the authors in \cite{GMS}, it is possible to extract topological information of a link by studying the independence complex of certain graphs built from its diagrams. In particular, given a diagram $D$ of a link $L$, we can apply a particular smoothing to its crossings to obtain a disjoint union of circles with chords denoting where the crossings were smoothed. These circles can be viewed as chord diagrams if the chords connecting different circles are ignored. The cohomology of the independence complex of the associated circle graph is isomorphic to the extreme Khovanov homology of the link diagram. Combinatorially studying homotopy types of independence complexes translates into studying extreme Khovanov homology, which is the fact motivating this article.

Figure~\ref{fig:pipeline} illustrates a summary of the construction process, starting with a diagram of the French coat of arms link and leading to an independence complex whose homotopy type is $S^1 \vee S^1$.

\begin{figure}[ht]
    \centering
    \includegraphics[height=1.3cm]{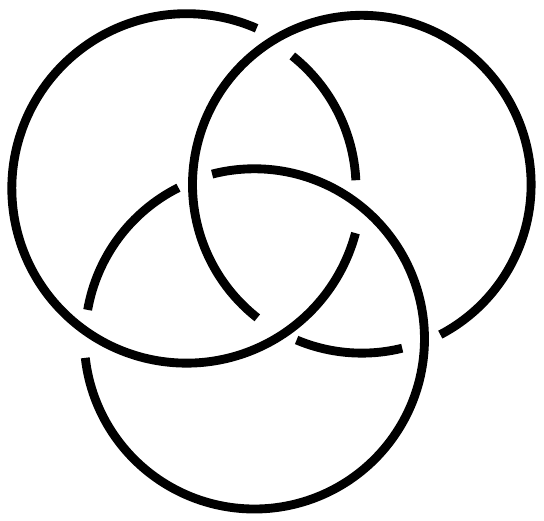} 
    \raisebox{.6cm}{\, $\to$ \,}
\includegraphics[height=1.3cm]{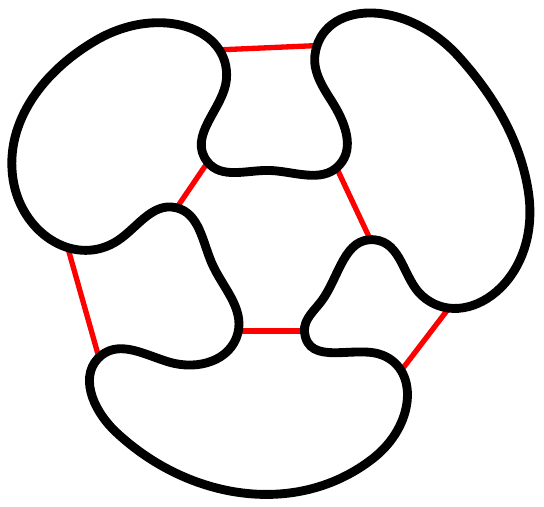} 
    \raisebox{.6cm}{\, $\to$ \,}
\includegraphics[height=1.3cm]{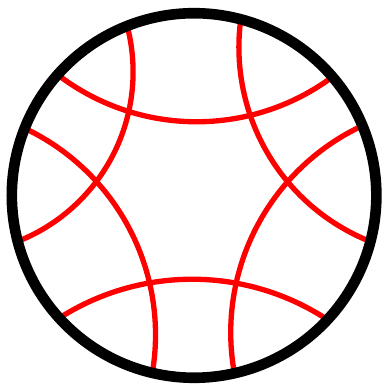}
    \raisebox{.6cm}{\, $\to$ \,}
    \raisebox{.15cm}{\includegraphics[height=1.0cm]{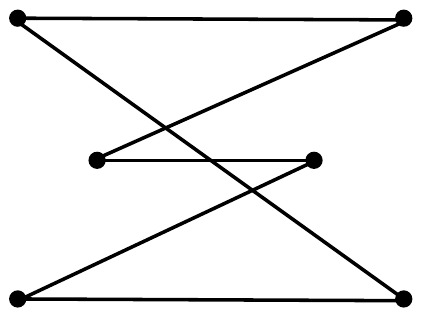} }
    \raisebox{.6cm}{\, $\to$ \,}
    \raisebox{.15cm}{\includegraphics[height=1.0cm]{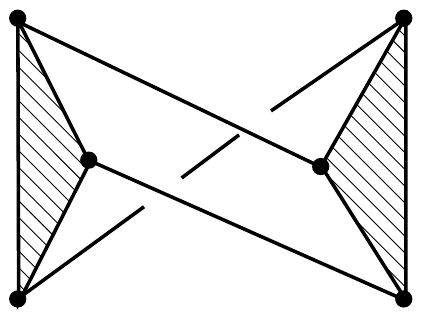}}
    \raisebox{.6cm}{\ $\to$ $S^1 \vee S^1$}

    \caption{From left to right: Link diagram $D$, its $A$-state, chord diagram $\mathcal{C}_D$, intersection graph $G(D)$, independence complex $I(D)$, and homotopy type.}
    \label{fig:pipeline}
\end{figure}

Our starting point is the following conjecture: 

\begin{conjecture}[Przytycki-Silvero \cite{PrzytyckiSilvero18}]
  \label{conj:general}
  If $G$ is a circle graph, then $I(G)$ has the homotopy type of a wedge of spheres.
\end{conjecture}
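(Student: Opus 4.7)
The plan is to proceed by induction on the number of chords $n = |V(G)|$. For $n \leq 1$ the complex $I(G)$ is either the empty complex or a point, and the conclusion is trivial. For the inductive step, fix a chord $c$ of the diagram $C$ defining $G$, let $v$ be its vertex in $G$, and invoke the standard pushout decomposition for independence complexes,
\[
I(G) \;\simeq\; I(G \setminus v) \;\cup_{I(G \setminus N[v])} \; \mathrm{Cone}\bigl(I(G \setminus N[v])\bigr),
\]
so that $I(G)$ is the homotopy cofibre of the inclusion $\iota : I(G\setminus N[v]) \hookrightarrow I(G \setminus v)$.

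The circle-graph structure makes both ends of $\iota$ tractable. First, $G \setminus v$ is the circle graph of $C \setminus \{c\}$, so $I(G \setminus v)$ is a wedge of spheres by the inductive hypothesis. Second, the key geometric observation is that $c$ separates the disk bounded by $C$ into two regions $L$ and $R$, and every chord disjoint from $c$ lies entirely in one region. Consequently $G \setminus N[v]$ splits as a disjoint graph union $G_L \sqcup G_R$ with no edges between the two parts, whence
\[
I(G \setminus N[v]) \;=\; I(G_L) * I(G_R).
\]
Both $G_L$ and $G_R$ are circle graphs on strictly fewer vertices, so by induction each factor is a wedge of spheres; since the join of wedges of spheres is again a wedge of spheres (via $S^p * S^q \simeq S^{p+q+1}$ and distributivity of join over wedge), so is $I(G \setminus N[v])$.

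The main obstacle is to control the mapping cone of $\iota$: attaching a cone along an arbitrary map between wedges of spheres need not yield a wedge of spheres, so one must understand how $\iota$ acts on each wedge summand. The strategy is to choose $c$ carefully so that $\iota$ becomes homotopically transparent. Natural candidates are an outermost chord (which forces one of $G_L, G_R$ to be empty and the join to collapse to a cone), a chord of minimal intersection number, or a chord selected via a greedy rule on the cyclic order of endpoints on the circle. With a good choice one hopes to show that $\iota$ factors, up to homotopy, as the inclusion of a sub-wedge, whence the cofibre is again a wedge of spheres. Failing this, a discrete Morse theoretic approach matching simplices of $I(G)$ via the cyclic order of chord endpoints---along the lines of the bipartite analysis carried out elsewhere in this paper---may directly produce an acyclic matching whose critical cells realize the desired wedge structure, completing the induction.
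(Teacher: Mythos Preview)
The statement you are trying to prove is a \emph{conjecture}, not a theorem: the paper does not contain a proof of it, and explicitly remarks that existing techniques ``do not seem to be strong enough to prove it in full generality.'' So there is no proof in the paper to compare against; your proposal is an attempt at an open problem.

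On the mathematics: the decomposition you set up is correct and standard. It is true that $G\setminus v$ is again a circle graph, and the splitting $G\setminus N[v]\cong G_L\sqcup G_R$ along the chord $c$ is a genuine and well-known feature of circle graphs, so by induction both $I(G\setminus v)$ and $I(G\setminus N[v])$ are wedges of spheres. The entire difficulty, however, is exactly the step you flag as ``the main obstacle'' and then do not resolve: the homotopy cofibre of a map between wedges of spheres is in general \emph{not} a wedge of spheres. Your suggestions for closing this gap do not work. Choosing $c$ outermost forces $v$ to be isolated (every other chord then lies in the single nonempty arc and misses $c$), so $I(G)$ is a cone and contractible; this says nothing about diagrams with no outermost chord. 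The vaguer proposals---a chord of minimal intersection number, a greedy rule, or an unspecified discrete Morse matching---are not arguments but hopes; you give no mechanism by which any of them would force $\iota$ to be homotopic to a sub-wedge inclusion, and there is no reason to expect this in general.

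In short, your write-up correctly isolates where the problem lies but does not supply the missing idea. What you have is the natural first attempt that experts have already tried; the conjecture remains open precisely because no one knows how to control that attaching map.
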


Note that in Conjecture~\ref{conj:general}, a wedge of spheres is thought as an arbitrary number of spheres iteratively built up by attaching a sphere to an existing wedge of spheres along an arbitrary point. In particular, any disjoint union of spheres is a wedge of spheres in this definition, as can be seen when starting this process with a number of $0$-dimensional spheres.

However, if the attaching point is required to be the same for all spheres, as is the case in the definition of wedge sum in \cite[Chapter 0]{HatcherAlgTop}, then Conjecture~\ref{conj:general} should be rephrased by substituting {\it{``wedge of spheres''}} by {\it{``disjoint union of wedges of spheres''}}, as shown in Section~\ref{sec:properties}.

There is extensive literature devoted to the study of the independence complexes arising from families of graphs, and how their homotopy types are modified when performing specific transformations in these graphs; see, for example, \cite{Barmak, Csorba09, Jonsson, Kozlov, Nagel-Reiner, PrzytyckiSilvero18, PrzytyckiSilvero24}. Although, these references support Conjecture~\ref{conj:general}, the techniques developed in them do not seem to be strong enough to prove it in full generality.

As already mentioned, Conjecture~\ref{conj:general} was originally motivated by the study of Khovanov homology, a bigraded homological invariant of knots and links categorifying the Jones polynomial \cite{Kho1}. It can be shown that the graphs arising from a link diagram are bipartite, and therefore, in the context of Khovanov homology, the interest of Conjecture \ref{conj:general} is restricted to the family of bipartite circle graphs. Hence, one can consider the following weaker version of Conjecture \ref{conj:general}.

\begin{conjecture}[Przytycki-Silvero \cite{PrzytyckiSilvero18}]
  \label{conj:bipartite}
  If $G$ is a bipartite circle graph, then $I(G)$ has the homotopy type of a wedge of spheres.
\end{conjecture}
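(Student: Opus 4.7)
The plan is to proceed by induction on the number of chords, using the standard vertex-splitting decomposition of an independence complex. For any vertex $v$ of a graph $G$ one has a pushout description
\[
I(G) \;\simeq\; I(G \setminus v) \;\cup_{I(G \setminus N[v])}\; \mathrm{Cone}\bigl(I(G \setminus N[v])\bigr),
\]
since a maximal face of $I(G)$ containing $v$ is precisely a face of $I(G \setminus N[v])$ together with $v$. Both operations $G \mapsto G \setminus v$ and $G \mapsto G \setminus N[v]$ preserve the property of being a bipartite circle graph, because vertex deletion corresponds to chord deletion in the underlying chord diagram, so by induction the two smaller independence complexes are wedges of spheres.

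Before invoking this decomposition I would reduce to a canonical form using classical homotopy-preserving simplifications of independence complexes, most importantly the fold lemma: if $u, v$ satisfy $N(u) \subseteq N(v)$, then $I(G) \simeq I(G \setminus v)$. In a chord diagram, such a domination translates into one chord being "shielded" by another in terms of crossings, a relation that can be read off directly from the cyclic order of endpoints. I would first argue that after exhaustively folding, the chord diagram of $G$ takes a restricted shape in which the two bipartition classes form laminar families whose nesting patterns interact in a controlled way, so that an extreme (outermost, unnested) chord $v$ is always available.

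Having selected such a $v$, the next step is to exploit the fact that removing $N[v]$ splits the remaining chord diagram into independent pieces (arcs of the circle containing disjoint sub-diagrams), whence $I(G \setminus N[v])$ decomposes as a join of the independence complexes of those pieces. I would try to show that the inclusion $I(G \setminus N[v]) \hookrightarrow I(G \setminus v)$ factors through a contractible subcomplex on each wedge summand corresponding to spheres whose dimension is controlled by the piece they live in, so that attaching a cone along this inclusion only creates new spheres (via the suspension of summands on which the inclusion is null-homotopic) and does not disturb the wedge structure elsewhere.

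The main obstacle is precisely this last point, and it is the reason the conjecture has resisted the existing toolbox. The mapping cone of an arbitrary map between wedges of spheres is not in general a wedge of spheres, so controlling the attaching map demands genuinely using the circle-graph hypothesis rather than merely the bipartite one. A promising auxiliary approach would be discrete Morse theory on the face poset of $I(G)$ with an acyclic matching read off from the cyclic order of chord endpoints, so that critical cells correspond bijectively to combinatorial "sphere witnesses" in the chord diagram; a fallback is to first verify the conjecture on natural sub-families (for instance, chord diagrams whose two laminar families have bounded nesting depth) and then bootstrap by induction on nesting depth to the general bipartite case.
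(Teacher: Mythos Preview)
The statement you are attempting to prove is presented in the paper as an open conjecture (Conjecture~\ref{conj:bipartite}); the paper offers no proof of it, only supporting evidence (Proposition~\ref{disconnected}, Theorem~\ref{prop:dim1}, and the remark that Jonsson's suspension theorem is consistent with it). So there is no ``paper's own proof'' to compare against, and your proposal should be read as an attack on an open problem rather than a reconstruction.

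As a proof, your outline has a genuine gap, and you identify it yourself: in the pushout
\[
I(G)\;\simeq\; I(G\setminus v)\;\cup_{I(G\setminus N[v])}\;\mathrm{Cone}\bigl(I(G\setminus N[v])\bigr),
\]
knowing by induction that $I(G\setminus v)$ and $I(G\setminus N[v])$ are wedges of spheres does not force $I(G)$ to be one; the homotopy type of the cofibre depends on the attaching map, not just on the homotopy types of source and target. Your plan to ``show that the inclusion factors through a contractible subcomplex on each wedge summand'' is exactly the missing step, and nothing in the argument explains why the circle-graph hypothesis guarantees this. The preliminary reduction is also unjustified: it is asserted, but not argued, that exhaustive folding brings an arbitrary bipartite chord diagram to a form in which the two colour classes are laminar families and an ``outermost'' chord with the required splitting behaviour always exists. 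Without that structural lemma the induction never gets off the ground, and with it you would already be most of the way to the conjecture. The discrete-Morse and bounded-nesting suggestions at the end are reasonable directions to explore, but as written the proposal is a sketch of a strategy, not a proof.
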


This article is organised as follows. In Section~\ref{sec:setup}, we review the relation of extreme Khovanov homology with the theory of independence simplicial complexes. In Section~\ref{sec:properties}, we go through a number of topological properties of independence complexes of bipartite circle graphs. As an application,  Section~\ref{sec:examples} contains detailed computations of extreme Khovanov homology for a diagram of a four-strand pretzel knot.

\subsection*{Acknowledgements}

The content of this article summarizes the work authors did during the workshop {\em Low Dimensional Topology: Invariants of Links, Homology Theories, and Complexity} held at the Matrix Research Institute (The University of Melbourne) in June 2024. Bakshi was supported by Dr. Max Rössler, the Walter Haefner Foundation, and the ETH Zürich Foundation. Ibarra was supported by the Australian Research Council (ARC) under the discovery project scheme, grant numbers DP210103136 and DP240102350. Montoya-V. was supported by the Matrix-Simons travel grant and acknowledges the support of the National Science Foundation through Grant DMS-2212736. Mukherjee was supported by the Matrix-Simons travel grant and is grateful to P. Vojt\v{e}chovsk\'{y} for his support through the Simons Foundation Mathematics and Physical Sciences Collaboration Grant for Mathematicians no. 855097. Silvero was partially supported by the Spanish Research Grant PID2020-117971GB-C21 funded by MCIN/AEI/10.13039/501100011033. Spreer was supported by the ARC under the discovery project scheme, grant number DP220102588.

\section{Khovanov homology and independence complexes}
\label{sec:setup}

Khovanov homology is a homological link invariant, introduced in \cite{Kho1} as a categorification of the Jones polynomial. This invariant has shown to be quite powerful. For instance, it detects the unknot, both the right and left handed trefoils, the figure-eight knot, and the knot $5_1$ \cite{BaHuSi, BaDoLeLiSa, BaSi, KrMr}.

Let $D$ be a link diagram. For every $i,j \in \mathbb{Z}$, Khovanov introduced a bigraded chain complex $$\ldots \, \longrightarrow \, C^{i,j}(D) \, \stackrel{\partial_i}{\longrightarrow} \, C^{i+1,j}(D) \, \longrightarrow \ldots,$$
where the chain groups $C^{i,j}(D)$ are non-trivial for a finite number of $j\in \mathbb{Z}$. The homology groups associated to that complex, $Kh^{i,j}(D)$ are link invariants, and we refer to them as the \textit{Khovanov homology} groups of $D$ (or, more generally, of the link $L$ represented by $D$). 
We define

\[j_{\min}(D) = \min \{j \, | \, C^{*,j}(D) \mbox{ is non-trivial}\} \]

\noindent as the smallest $j$-grading for which the chain group is non-trivial.
Note that the value of $j_{\min}(D)$ (and the Khovanov complex itself) is not a link invariant, and, in fact, can differ for two diagrams representing the same link. However, we know that $Kh^{*,j}(L)$ is trivial when $j \leq j_{\min}(D)$, for any diagram $D$ representing $L$. 

In other words, the value $j_{\min}(D)$ is the lowest index $j$ for which Khovanov homology {\em can} be non-trivial. We call the complex $\{C^{*,j_{\min}(D)}, \partial_i\}$ the \textit{extreme Khovanov complex} and its associated homology groups $Kh^{*, j_{\min}}(D)$ the \textit{extreme Khovanov homology groups} of $D$.

In \cite{GMS}, a geometric realization of extreme Khovanov homology was introduced. More precisely, it was shown that the extreme Khovanov homology groups of a link diagram coincide with the cohomology groups of a certain simplicial complex constructed from the diagram. We briefly review this construction now. 

Given a link diagram $D$, start by smoothing each crossing. For each crossing this can be done in two distinct ways. We choose for all crossings what is classically known as an \emph{$A$-smoothing}, which is illustrated in Figure~\ref{fig:smoothing}. This choice yields what is known as the \emph{$A$-state} of $D$. We write $\mathcal{C}_D$ for the resulting set of circles and chords. If we forget about those chords connecting different circles, then we can think of $\mathcal{C}_D$ as a finite family of disjoint chord diagrams $\mathcal{C}_1, \ldots, \mathcal{C}_k$. See Figure \ref{FigLink} for an example. 

\begin{figure}
    \centering
    \begin{subfigure}{.32\textwidth}
        \centering
         \raisebox{.2cm}{\includegraphics[height=1cm]{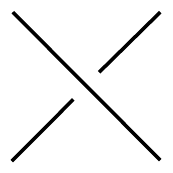}}
    \raisebox{.6cm}{\, $\to$ \,}
    \raisebox{.2cm}{\includegraphics[height=1cm]{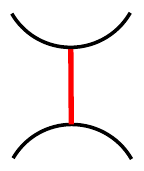}}
     \caption{$A$-smoothing ($A$-label).}
    \label{fig:smoothing}
    \end{subfigure}
     \begin{subfigure}{.32\textwidth}
        \centering
         $  \vcenter{\hbox{\begin{overpic}[scale=.15]{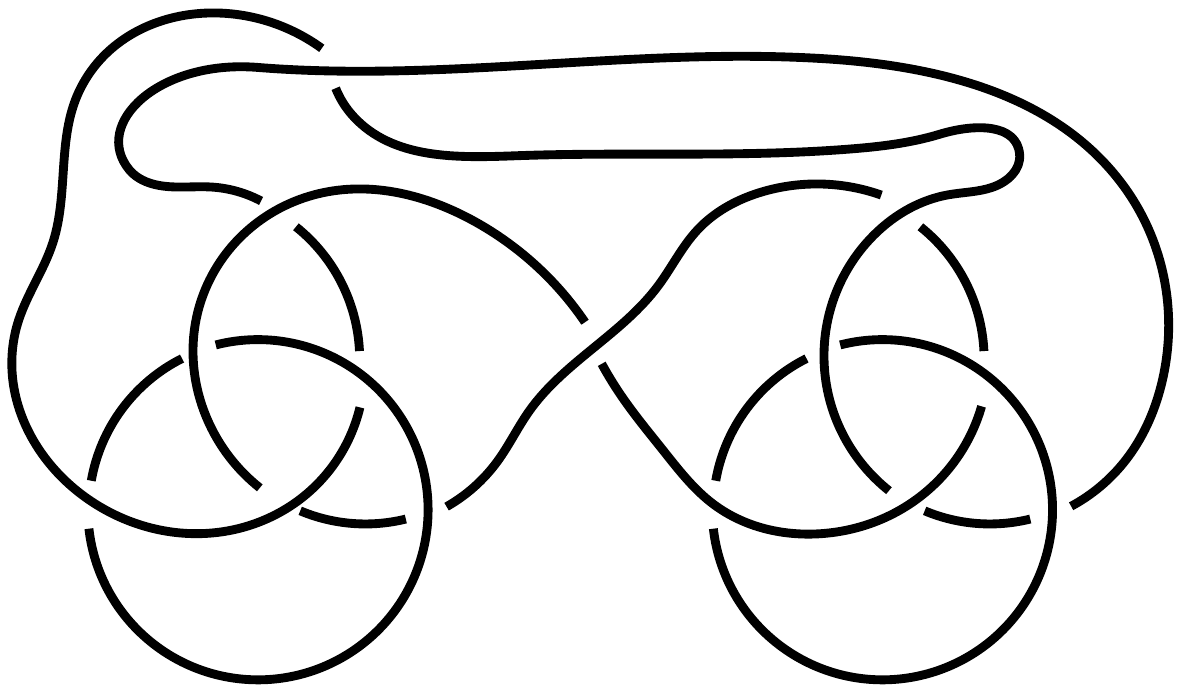}
\end{overpic}}}$
\caption{A link diagram $D$.}
    \label{fig:alinkexample}
    \end{subfigure}
     \begin{subfigure}{.32\textwidth}
        \centering
          $  \vcenter{\hbox{\begin{overpic}[scale=.15]{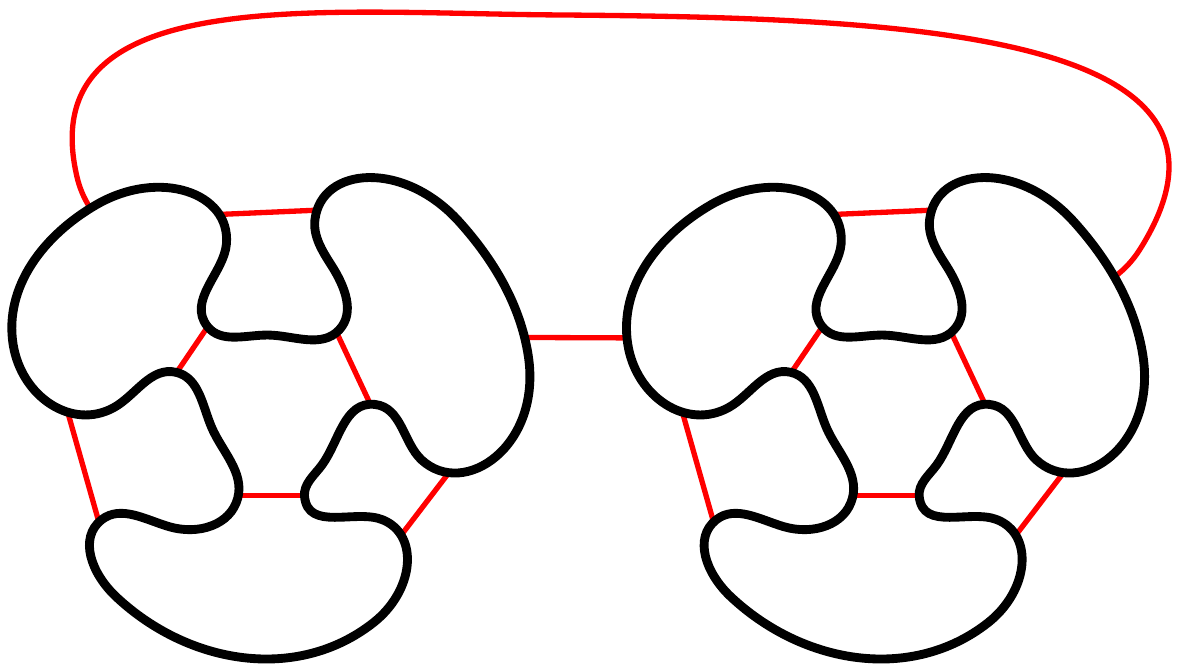}
\end{overpic}}}$
     \caption{Chord diagram $\mathcal{C}_D$.}
    \label{fig:achordexample}
    \end{subfigure}
    \caption{A link diagram and the chord diagram obtained by applying an $A$-smoothing to all of its crossings.}
    \label{FigLink}
\end{figure}

We write $G(D)$ and $I(D)$ for the intersection graph associated to $D$, and its independence complex, obtained respectively as $G(D) = G_{1} \sqcup \ldots \sqcup G_{k}$, and $I(D) = I(G_1) \ast \, \cdots \, \ast I(G_k)$, where $G_i$ denotes the graph $G_{\mathcal{C}_i}$, for $1 \leq i \leq k$, and $\ast$ is the join operation. When constructing the intersection graph $G_i$, its vertices are in bijection with the chords having both endpoints in $\mathcal{C}_i$ (drawn inside the circle), and edges between vertices if the corresponding chords intersect (or, equivalently, edges connecting vertices associated to two chords if their endpoints alternate along the circle in $\mathcal{C}_i$). By construction, the circle graph $G(D)$ is bipartite. 

The following result relates the extreme Khovanov homology of a link diagram and the independence complex arising from it. 

\begin{theorem}\label{TeoGMS}\cite{GMS}
Let $D$ be a link diagram with $n$ negative crossings. Then, the simplicial cochain complex of $I(D)$ is isomorphic to the extreme Khovanov complex 
shifted by $n-1$. As a consequence, $$Kh^{i,j_{\min}}(D) \simeq H^{i-1+n}(I(D)).$$ 
\end{theorem}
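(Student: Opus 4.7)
The plan is to set up an explicit bijection between a distinguished basis of the extreme Khovanov complex and the oriented simplices of $I(D)$, and then to show that under this bijection the Khovanov differential becomes the simplicial coboundary.

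First I would recall that a basis of $C^{*,*}(D)$ consists of enhanced states $(s,\epsilon)$, where $s$ is an assignment of an $A$- or $B$-smoothing to each crossing and $\epsilon$ labels each resulting circle by $v_+$ or $v_-$. With the standard convention $j(s,\epsilon) = |s| - c(s,\epsilon) + n_+ - 2n_-$, where $|s|$ is the number of $B$-smoothings, $c(s,\epsilon)$ is the signed count of circles coming from the labelling, and $n_\pm$ is the number of positive/negative crossings, minimising $j$ forces every circle to be labelled $v_-$ and demands that each $B$-smoothing create one extra circle. The key combinatorial lemma I would prove is that an enhanced state $(s,\epsilon)$ lies in the extreme grading $j_{\min}(D)$ if and only if (i) every circle carries a $v_-$ label, and (ii) the chords activated by $s$ all lie inside one of the circles $\mathcal C_1,\dots,\mathcal C_k$ of the $A$-state and form a pairwise non-crossing family in their respective chord diagrams. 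The ``only if'' is obtained by observing that activating an inter-circle chord decreases the circle count, and activating a chord that crosses a previously activated one also lowers $c(s)-|s|$ by $2$ (after analyzing how a split followed by a merge plays out on a pair of crossing chords). The ``if'' direction is the base case $s=\emptyset$ together with the fact that each successive non-crossing intra-circle smoothing splits exactly one circle into two, holding $|s|-c(s)$ constant.

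Condition (ii) says exactly that the set of activated chords is an independent set of the bipartite graph $G(D)$, i.e.\ a simplex of $I(D)$. Writing the bijection as $(s,\epsilon_-)\mapsto \sigma_s$, with $\dim\sigma_s = |s|-1$ and Khovanov homological degree $i=|s|-n_-$, we get the grading shift $i-1+n_- = \dim \sigma_s$, which matches the index in the claim. Next I would verify that the Khovanov differential restricts to the extreme $j$-grading as follows: it is a signed sum over crossings $c$ of local Frobenius maps; to stay in grading $j_{\min}$ after adding one more $B$-smoothing at $c$ while keeping all labels $v_-$, the comultiplication $v_-\mapsto v_-\otimes v_-$ must fire (and not the multiplication, which sends $v_-\otimes v_-$ to $0$). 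By the lemma, this occurs precisely when $c$ is an intra-circle chord, non-crossing with all currently activated chords; equivalently, when the corresponding vertex of $G(D)$ extends the current independent set to a larger one. Hence the differential $(s,\epsilon_-)\mapsto \sum \pm(s\cup\{c\},\epsilon_-)$ is visibly the simplicial coboundary on $I(D)$, up to signs.

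Matching signs will be the main technical obstacle and the only place a routine calculation is really required. One fixes a total ordering on the vertices of $G(D)$ (equivalently, on the crossings of $D$) and compares the Khovanov sign $(-1)^{\#\{c'<c : c'\in s\}}$ attached to the edge $s\to s\cup\{c\}$ with the standard simplicial sign $(-1)^{\mathrm{pos}(c,\sigma_{s\cup\{c\}})}$; both count how many of the chord-vertices precede $c$ in the chosen order, so the two complexes are isomorphic (not merely quasi-isomorphic) after the homological shift by $n=n_-$. Taking cohomology, $Kh^{i,j_{\min}}(D)\cong H^{i-1+n}(I(D))$, which is the stated identity.
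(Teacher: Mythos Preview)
The paper does not supply its own proof of this theorem: it is quoted verbatim from \cite{GMS} and used as a black box, so there is nothing in the present paper to compare your argument against line by line.

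That said, your outline is exactly the strategy of the original reference. The characterisation of the generators in quantum degree $j_{\min}$ as all-$v_-$ enhanced states whose $B$-smoothed crossings form an independent set of $G(D)$, the identification of the restricted Khovanov differential with the simplicial coboundary via the observation that $m(v_-\otimes v_-)=0$ while $\Delta(v_-)=v_-\otimes v_-$, and the bookkeeping of the shift $i-1+n=\dim\sigma_s$ are precisely the ingredients of the proof in \cite{GMS}. One small point worth tightening in your write-up: the ``only if'' direction of the key lemma is cleanest if you argue globally rather than step by step. Since each single $A\to B$ flip changes $|s|-c(s)$ by $0$ or $+2$, this quantity is non-decreasing along any path from the all-$A$ state, and hence equals $-c_A$ at $s$ if and only if \emph{some} activation order consists entirely of splits; you then check that such an order exists precisely when the activated chords are pairwise non-crossing and intra-circle. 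Phrased this way you avoid having to worry about whether a merge at one stage could be ``undone'' by later splits. The sign discussion is fine: a fixed ordering of the crossings makes the Khovanov edge-signs literally equal to the simplicial ones.
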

\section{Homotopy types of independence complexes of circle graphs}
\label{sec:properties}

In this section we present properties of independence complexes arising from circle graphs, with a focus on the specific case when we require the graph to be, additionally, bipartite.

Consider two chord diagrams $\mathcal{C}$ and $\mathcal{C}'$ with respective base points $c$ and $c'$ on their circles disjoint from the chords. Cutting the circles at $c$ and $c'$ in the projection plane, and regluing them in a crossing-free way yields the {\em connected sum} $\mathcal{C} \#_{c,c'} \mathcal{C}'$ of $\mathcal{C}$ and $\mathcal{C}'$ at $c$ and $c'$. Observe that, given two chord diagrams, the independence complex of the circle graph arising from their connected sum coincides with that of the graph associated to their disjoint union, i.e., \begin{equation}\label{join} I(G_{\mathcal{C} \#_{c,c'} \mathcal{C}'}) = I(G_{\mathcal{C} \sqcup \mathcal{C}'}) = I(G_\mathcal{C} \sqcup G_{\mathcal{C}'}) = I(G_{\mathcal{C}}) \ast I(G_{\mathcal{C}'}).\end{equation} In particular, it does not depend on the choice of basepoints $c$ and $c'$.  

Equation~\ref{join} highlights that we can construct new independence complexes coming from chord diagrams that are joins of previously known independence complexes, using the connected sum operation on the level of (chord) diagrams. This, in turn, implies that we can use the following well-known homotopy equivalences involving the join and wedge operations of spheres of different dimensions to produce independence complexes from chord diagrams with interesting homotopy types. For $n,m,k \in \mathbb{N}$, then 

\begin{equation}
 S^k \ast (S^n \vee S^m) = (S^k \ast S^n) \vee (S^k \ast S^m) = S^{k+n+1} \vee S^{k+m+1}; \label{eq:join1}
\end{equation}
\begin{equation}
    *_n (S^1 \vee S^1) = \bigvee_{2^n} S^{2n-1}. \label{eq:join2}
\end{equation}

For instance, starting with chord diagram $\mathcal{C}_a$ from Figure~\ref{fig:s1wedges1}, we can consider its connected sum to get $\mathcal{C}_c = \mathcal{C}_a \# \mathcal{C}_a$, as shown in Figure~\ref{fig:4wedges3}. Since $I(G_{\mathcal{C}_a}) \sim_h S^1 \vee S^1$, equations~\eqref{join} and ~\eqref{eq:join2} when $n=2$ allow us to directly compute the homotopy type of $I(G_{\mathcal{C}_c})$ as $\bigvee_4 S^3$.

\begin{figure}[ht]
        \centering
       \begin{subfigure}{.3\textwidth}
       \centering
     $$  \vcenter{\hbox{\begin{overpic}[scale=.3]{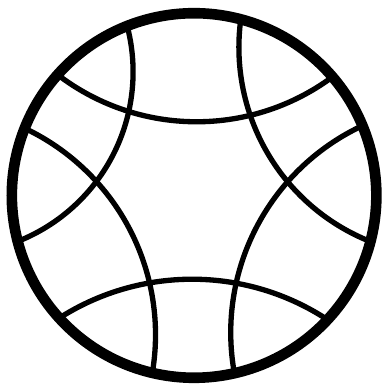}
\end{overpic}}}$$
\caption{$S^1 \vee S^1$}
        \label{fig:s1wedges1}
       \end{subfigure}
       \begin{subfigure}{.3\textwidth}
       \centering
       $$  \vcenter{\hbox{\begin{overpic}[scale=.3]{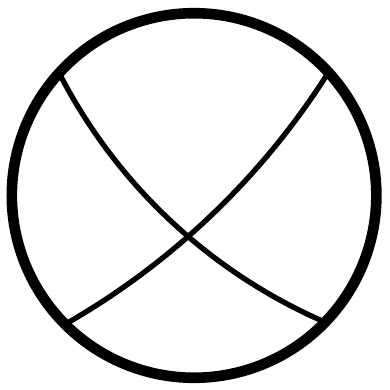}
\end{overpic}}}$$
            \caption{$S^0$}
        \label{fig:s1}
       \end{subfigure}
       \begin{subfigure}{.3\textwidth}
       \centering
       $$  \vcenter{\hbox{\begin{overpic}[scale=.2]{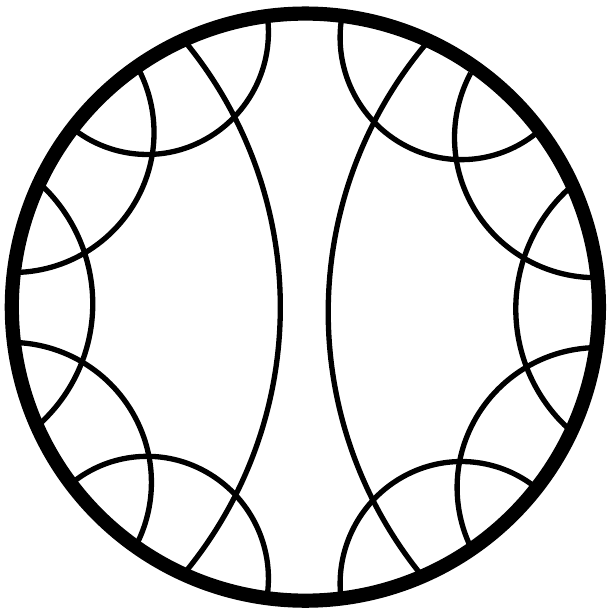}
\end{overpic}}}$$
            \caption{$ \bigvee_4 S^3$}
        \label{fig:4wedges3}
       \end{subfigure}
        \caption{Chord diagrams leading to bipartite circle graphs whose independence complexes are homotopic to the specified wedges of spheres.}
        \label{fig4}
    \end{figure}

We say that a chord diagram $\mathcal{C}$ is {\it{linear}} if it is possible to find two arcs $a$ and $b$ over the circle such that every chord in $\mathcal{C}$ has an endpoint in $a$ and the other endpoint in $b$. See Figure~\ref{fig:linearchord} for some examples. The circle graphs arising from linear chord diagrams are called {\em permutation graphs} in \cite{PrzytyckiSilvero18}.

\begin{figure}[ht]
        \centering
       \begin{subfigure}{.29\textwidth}
       \centering
     $  \vcenter{\hbox{\begin{overpic}[scale=.3]{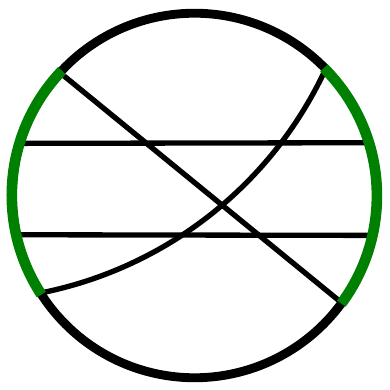}
\end{overpic}}}$
\caption{Linear chord diagram.}
        \label{fig:linear}
       \end{subfigure}
       \quad
       \begin{subfigure}{.29\textwidth}
       \centering
     $  \vcenter{\hbox{\begin{overpic}[scale=.3]{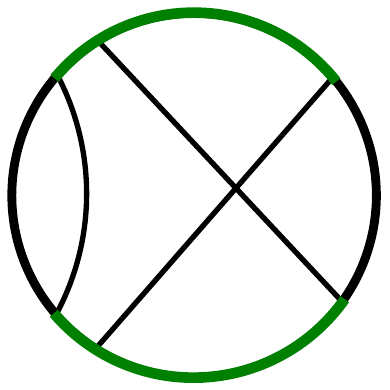}
\end{overpic}}}$
\caption{Linear chord diagram.}
        \label{fig:linear1}
       \end{subfigure}
       \quad
       \begin{subfigure}{.33\textwidth}
       \centering
       $  \vcenter{\hbox{\begin{overpic}[scale=.3]{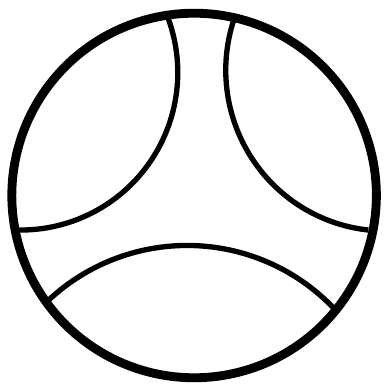}
\end{overpic}}}$
            \caption{Non-linear chord diagram.}
        \label{fig:nonlinear}
       \end{subfigure}
        \caption{Two linear chord diagrams, with the corresponding arcs $a$ and $b$ in green. The third chord diagram is not linear.}
    \label{fig:linearchord}
    \end{figure}

\begin{proposition}
\label{prop:disjointunion}
Let $\mathcal{C}$ and $\mathcal{C}'$ be two linear chord diagrams with associated independence complexes $I(\mathcal{C})$ and $I(\mathcal{C'})$. Then we can superimpose them to obtain a new linear chord diagram $\mathcal{D}$ such that the associated independence complex $I(\mathcal{D})$ has the homotopy type of $I(\mathcal{C}) \, \sqcup \, I(\mathcal{C'})$.
\end{proposition}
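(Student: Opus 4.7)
The plan is to build $\mathcal{D}$ so that in the resulting circle graph every vertex coming from $\mathcal{C}$ is adjacent to every vertex coming from $\mathcal{C}'$, while the internal adjacencies of each piece are preserved. Once this is achieved, no independent set of $G(\mathcal{D})$ can simultaneously contain a chord of $\mathcal{C}$ and a chord of $\mathcal{C}'$, so the independence complex splits as a disjoint union of the two pieces.

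Concretely, I would draw each linear diagram on a rectangle rather than on a circle, with arc $a$ along the top edge and arc $b$ along the bottom edge (the left and right sides of the rectangle being chord-free). To superimpose, split the top arc of $\mathcal{D}$ into a left half and a right half, and do the same with the bottom arc. Place the endpoints of the chords of $\mathcal{C}$ on the \emph{left} half of $a$ and the \emph{right} half of $b$, preserving their original left-to-right order on each half; place the endpoints of the chords of $\mathcal{C}'$ on the \emph{right} half of $a$ and the \emph{left} half of $b$, again preserving their original order. By construction, $\mathcal{D}$ is a linear chord diagram (all chords still have one endpoint on the top arc and one endpoint on the bottom arc). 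The relative positions of the endpoints of two chords of $\mathcal{C}$ are the same as in $\mathcal{C}$, so the crossing pattern inside $\mathcal{C}$ is preserved, and similarly for $\mathcal{C}'$. Finally, any chord of $\mathcal{C}$ runs from a point on the left half of $a$ to a point on the right half of $b$, while any chord of $\mathcal{C}'$ runs from the right half of $a$ to the left half of $b$, so these two chords must cross.

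Hence, as graphs, $G(\mathcal{D})$ is the \emph{graph join} of $G(\mathcal{C})$ and $G(\mathcal{C}')$: the disjoint union with all edges added between the two vertex sets. An independent set in such a join cannot meet both sides, so every nonempty simplex of $I(\mathcal{D})$ lies either in $I(\mathcal{C})$ or in $I(\mathcal{C}')$; on geometric realisations this gives
\begin{equation*}
|I(\mathcal{D})| \,=\, |I(\mathcal{C})| \,\sqcup\, |I(\mathcal{C}')|,
\end{equation*}
which in particular implies the desired homotopy equivalence.

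The main point to be careful about is not the topology, which is immediate from the graph-join observation, but the geometric realisation of the superposition: one has to argue cleanly that the chosen linear ordering of endpoints along $a$ and $b$ really produces (i) a legitimate linear chord diagram, (ii) the correct intra-diagram crossings, and (iii) a complete bipartite crossing pattern between the two families. Beyond that bookkeeping, the proof is a direct consequence of Equation~\eqref{join} applied in the opposite direction, replacing the join of independence complexes by a disjoint union when the underlying graphs are joined rather than disjointly unioned.
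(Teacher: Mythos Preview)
Your proof is correct and follows essentially the same approach as the paper: both interlace the defining arcs of $\mathcal{C}$ and $\mathcal{C}'$ around the circle so that every chord of one meets every chord of the other, yielding $G(\mathcal{D}) = G(\mathcal{C}) \ast G(\mathcal{C}')$ and hence $I(\mathcal{D}) = I(\mathcal{C}) \sqcup I(\mathcal{C}')$. Your version simply spells out the interlacing and the verification of the three properties in more detail than the paper does.
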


\begin{proof} \smartqed
First note that $\mathcal{C}$ and $\mathcal{C}'$ can be superimposed such that every chord of $\mathcal{C}$ intersects every chord of $\mathcal{C}'$. This can be achieved by interlacing the defining arcs of $\mathcal{C}$ and $\mathcal{C}'$ around a circle to obtain the chord diagram $\mathcal{D}$. By construction, $\mathcal{D}$ is linear. 

The resulting circle graph is $G(\mathcal{C}) \ast G(\mathcal{C'})$, and therefore the result follows from the definition of independence complex.
    \qed
\end{proof}

Moreover, we have the following result from \cite{PrzytyckiSilvero18}.

\begin{proposition}[Proposition 4.3 in \cite{PrzytyckiSilvero18}]
    \label{prop:wedge}
    For any wedge of spheres there exists a linear chord diagram whose independence complex is homotopy equivalent to it.
\end{proposition}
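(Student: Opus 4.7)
The plan is to proceed by induction on the pair $(k,N)$, where $k$ is the number of spheres in the target wedge $W = \bigvee_{i=1}^{k} S^{n_i}$ and $N = \sum_i n_i$ is the total dimension, using any well-founded order such as $k+N$. The induction will rest on a small set of building blocks and two combinatorial operations already at our disposal: the connected sum of linear chord diagrams, which by equation~\ref{join} produces the join on the level of independence complexes; and the superposition of Proposition~\ref{prop:disjointunion}, which produces the disjoint union. A preliminary task is to check that both operations preserve linearity — for the connected sum this is seen by choosing basepoints in the chord-free arcs on the outer sides of each linear picture, so that the new outer arcs $a \cup a'$ and $b \cup b'$ each form a single connected arc of the new circle; for the superposition linearity is immediate from the construction in Proposition~\ref{prop:disjointunion}.

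The building blocks I would use are: a single chord, whose intersection graph is $K_1$ and whose independence complex is a point; two crossing chords, whose intersection graph is $K_2$ and whose independence complex is $S^0$; and, more generally, $r$ pairwise crossing chords (a linear diagram obtained by placing $r$ endpoints on arc $a$ and their mates on $b$ in reversed order), whose intersection graph is $K_r$ and whose independence complex is the $r$-point discrete set $\bigvee_{r-1} S^0$. The base case of the induction is then $N=0$, where $W = \bigvee_k S^0$ is realized directly by the $K_{k+1}$ diagram.

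For the inductive step I would distinguish two cases. If every $n_i \geq 1$, then $W \simeq \Sigma W'$ with $W' = \bigvee_i S^{n_i-1}$; the total dimension drops by $k$, so by induction there is a linear diagram $\mathcal{D}'$ with $I(\mathcal{D}') \simeq W'$, and taking the connected sum of $\mathcal{D}'$ with the two-crossing-chord diagram produces a linear $\mathcal{D}$ with $I(\mathcal{D}) \simeq I(\mathcal{D}') \ast S^0 \simeq \Sigma W' \simeq W$. If instead some $n_i=0$, split $W$ as $W_+ \vee W_0$, where $W_+$ collects the summands of positive dimension and $W_0 = \bigvee_r S^0$ collects the $r$ zero-dimensional ones; under the iterative convention this is homotopy equivalent to $W_+ \sqcup \{r \text{ points}\}$. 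By induction on $k$, realize $W_+$ by a linear diagram $\mathcal{D}_+$, then superpose with a linear diagram of $r$ pairwise crossing chords; Proposition~\ref{prop:disjointunion} then yields a linear $\mathcal{D}$ with $I(\mathcal{D}) \simeq W_+ \sqcup \{r \text{ points}\} \simeq W$.

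The main obstacle I anticipate is not the induction itself but the bookkeeping: carefully identifying the paper's iterative notion of ``wedge of spheres'' with a finite disjoint union of classical connected wedges of spheres, so that the inductive hypothesis really suffices; and verifying the two linearity-preservation statements for the connected sum and the superposition. A secondary delicate point is making sure the suspension step $X \mapsto X \ast S^0$ delivers the correct homotopy type even when the input $X$ is disconnected; since every space appearing is a CW complex built from spheres, this follows from the identity $X \ast S^0 \simeq \Sigma X$ together with the standard behaviour of suspensions of wedges of spheres. Once these bookkeeping items are dispatched, the induction closes cleanly.
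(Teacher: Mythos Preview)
The paper does not actually give its own proof of this proposition: it is quoted verbatim as Proposition~4.3 of \cite{PrzytyckiSilvero18}, with only the remark that ``the proof of Proposition~\ref{prop:wedge} is constructive and very explicit.'' So there is no in-paper argument to compare against. What I can do is assess your argument on its own terms and infer how it relates to the cited one.

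Your induction is sound. The base case ($K_{k+1}$ realising $\bigvee_k S^0$) is correct, linearity is preserved under both the connected-sum and the superposition operations as you describe, and the two inductive reductions $(k,N)\to(k,N-k)$ and $(k,N)\to(k-r,N)$ strictly decrease $k+N$. The suspension bookkeeping you flag as delicate does go through: for a classical wedge $W'=\bigvee_i S^{\,n_i-1}$ with $m$ of the $n_i$ equal to $1$, the space $W'$ has $m+1$ components, and $\Sigma W'\simeq \Sigma W'_+\vee\bigvee_m S^1=\bigvee_i S^{n_i}$, exactly as required.

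That said, your route is almost certainly different from the one in \cite{PrzytyckiSilvero18}. You invoke Proposition~\ref{prop:disjointunion} inside your proof of Proposition~\ref{prop:wedge}; but the paper's logical flow treats these as two independent ingredients which are only \emph{combined afterwards} to obtain disjoint unions of wedges. This, together with the description ``constructive and very explicit'', points to a direct one-shot construction of a linear chord diagram for a given (connected) $\bigvee_i S^{n_i}$, rather than an induction that passes through disconnected intermediaries. Your argument buys modularity --- it reuses the superposition lemma and reduces everything to two small moves --- at the cost of being less self-contained than the cited explicit construction. Both are valid; yours is not the paper's.
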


The proof of Proposition~\ref{prop:wedge} is constructive and very explicit. Combining Proposition~\ref{prop:disjointunion} and Proposition~\ref{prop:wedge} now yields the following proposition.

\begin{proposition}
For every $i= 1, \ldots, k$, let $X_i$ be an arbitrary wedge of spheres of (possibly) different dimensions, and consider the disjoint union $Y = X_1 \sqcup \ldots \sqcup X_k$. Then, there exists a circle graph $G_Y$ such that $I(G_Y)$ is homotopy equivalent to $Y$. 
\end{proposition}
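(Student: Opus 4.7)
The plan is to combine the two previous propositions directly: Proposition~\ref{prop:wedge} gives us a linear chord diagram realizing each wedge summand $X_i$, and Proposition~\ref{prop:disjointunion} lets us glue linear chord diagrams so that their independence complexes become a disjoint union. The result then follows by induction on $k$.

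More precisely, for each $i = 1, \ldots, k$, I would first invoke Proposition~\ref{prop:wedge} to produce a linear chord diagram $\mathcal{C}_i$ with $I(G_{\mathcal{C}_i}) \simeq_h X_i$. The base case $k = 1$ is then immediate, and I would set $G_Y := G_{\mathcal{C}_1}$, which is a (permutation, hence) circle graph. For the inductive step, suppose we have already produced a linear chord diagram $\mathcal{D}_{k-1}$ such that $I(G_{\mathcal{D}_{k-1}}) \simeq_h X_1 \sqcup \cdots \sqcup X_{k-1}$. Applying Proposition~\ref{prop:disjointunion} to the pair $(\mathcal{D}_{k-1}, \mathcal{C}_k)$ yields a new linear chord diagram $\mathcal{D}_k$ whose independence complex is homotopy equivalent to $I(G_{\mathcal{D}_{k-1}}) \sqcup I(G_{\mathcal{C}_k}) \simeq_h (X_1 \sqcup \cdots \sqcup X_{k-1}) \sqcup X_k = Y$. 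Taking $G_Y$ to be the intersection graph of $\mathcal{D}_k$ finishes the argument.

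The only subtlety I want to double-check is that the inductive use of Proposition~\ref{prop:disjointunion} is legitimate: the output $\mathcal{D}_{k-1}$ is genuinely a \emph{linear} chord diagram, not merely an arbitrary chord diagram whose independence complex happens to match. This is precisely what is guaranteed by the construction in the proof of Proposition~\ref{prop:disjointunion}, where the defining arcs of the two inputs are interlaced around the circle so that the resulting superposition still admits a pair of arcs $(a,b)$ containing one endpoint of each chord. Thus linearity is preserved by the gluing, and the induction goes through without obstruction.

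Because the heavy lifting has already been done in Propositions~\ref{prop:disjointunion} and~\ref{prop:wedge}, I do not anticipate a major technical obstacle; the proposition is essentially a packaging result. The only place where one has to be mildly careful is the bookkeeping described above (keeping linearity as an invariant of the induction), and perhaps remarking that the circle graph $G_Y$ produced this way is in fact bipartite whenever each $\mathcal{C}_i$ was chosen with bipartite intersection graph — which would be relevant for the Khovanov-theoretic application but is not required by the statement itself.
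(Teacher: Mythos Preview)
Your proof is correct and follows exactly the paper's approach: apply Proposition~\ref{prop:wedge} to realize each $X_i$ by a linear chord diagram, then iterate Proposition~\ref{prop:disjointunion}. Your extra care in verifying that linearity is preserved under the interlacing is a welcome clarification.

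One small correction to your closing remark: the interlacing construction does \emph{not} preserve bipartiteness of the intersection graph, even when each input $\mathcal{C}_i$ has bipartite intersection graph. Indeed, since every chord of $\mathcal{C}$ intersects every chord of $\mathcal{C}'$ in the superposition, any three chords taken from alternating factors form a triangle in $G_Y$. The paper notes this explicitly just after the proposition, and it is precisely why the subsequent discussion restricts attention to the bipartite case separately.
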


\begin{proof}\smartqed
First, we apply Proposition~\ref{prop:wedge} to obtain a linear chord diagram $\mathcal{C}_i$ whose associated circle graph $G_i$ gives rise to $I(G_i) \sim_h X_i$, for every $1~\leq~i~\leq~k$. 
The result then holds by iteratively applying Proposition~\ref{prop:disjointunion}.
    \qed
\end{proof}

Note that if the definition of a wedge of an arbitrary number of spheres enforces a fixed point of identification, then Proposition~\ref{prop:disjointunion} yields an infinite number of counterexamples to Conjecture~\ref{conj:general}.

Observe that interlacing linear chord diagrams does not preserve the bipartite character of the graphs. As a result, some of the above observations cannot occur for chord diagrams coming from link diagrams -- which are bipartite. For this reason, in what follows we restrict our analysis to that of independence complexes arising from bipartite circle graphs.

Jonsson establishes in \cite[Theorem 1]{Jonsson} that independence complexes of bipartite circle graphs have the homotopy type of a suspension. Since wedges of spheres are suspensions, this is compatible with Conjecture~\ref{conj:bipartite}. The following result is a corollary of Jonsson's result, but we include a proof for completeness.

\begin{proposition}
  \label{disconnected}
The independence complex of a bipartite circle graph either has the homotopy type of $S^0$, or it is connected.
\end{proposition}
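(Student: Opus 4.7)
The plan is to derive the statement directly from Jonsson's theorem, which was recalled just above: every bipartite circle graph $G$ yields an independence complex $I(G)$ that is homotopy equivalent to a suspension $\Sigma X$ for some topological space $X$. Since both ``being connected'' and ``being homotopy equivalent to $S^0$'' are homotopy invariants (they are controlled by the number of path components), the proposition will follow once I establish the corresponding dichotomy for $\Sigma X$.

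I would then split the argument into two cases depending on whether $X$ is empty. If $X = \emptyset$, the suspension construction $\Sigma X = (X \times [-1,1]) / (X \times \{-1\}, X \times \{1\})$ collapses nothing and leaves exactly the two cone points, so $\Sigma X = S^0$ on the nose, giving $I(G) \simeq S^0$. If $X \neq \emptyset$, I would exhibit path-connectedness explicitly: choose any $x \in X$, so that the image of $t \mapsto (x,t)$ provides a path in $\Sigma X$ between the two cone points $v_-$ and $v_+$; every point of $\Sigma X$ can be joined by a ``vertical'' cone path to at least one of $v_\pm$, and concatenating these yields a path between any two points. Hence $\Sigma X$, and therefore $I(G)$, is path-connected.

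The only real step requiring work is the reduction to Jonsson's theorem itself; the suspension dichotomy is elementary topology. I do not expect any genuine obstacle, the only point of care being the bookkeeping for the case $X = \emptyset$, which is precisely the source of the $S^0$ alternative in the statement.
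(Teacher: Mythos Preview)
Your argument is correct, but it is not the route the paper takes. The paper explicitly flags the proposition as a corollary of Jonsson's theorem and then gives an independent, purely combinatorial proof: since $G$ is bipartite with parts $A$ and $B$, both $A$ and $B$ are independent sets and hence span full simplices $\Delta_A$, $\Delta_B$ in $I(G)$, and every vertex of $I(G)$ lies in one of them. If $G$ is complete bipartite there are no further edges, so $I(G)=\Delta_A\sqcup\Delta_B\simeq S^0$; otherwise some extra edge of $I(G)$ joins a vertex of $A$ to a vertex of $B$, connecting the two simplices.

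Your approach is exactly the deduction from Jonsson that the paper alludes to but bypasses. It is shorter once Jonsson is granted, but it imports a substantially heavier result; the paper's argument is elementary, self-contained, and in fact applies to \emph{any} bipartite graph, not only circle graphs. One small point worth making explicit in your write-up: the conclusion ``$I(G)$ is connected'' (not merely ``has the homotopy type of a connected space'') goes through because $I(G)$ is a simplicial complex, so connectedness and path-connectedness coincide, and the latter is preserved under homotopy equivalence.
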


\begin{proof}\smartqed
The independence complex associated to a bipartite graph $G$ contains two cliques with vertex sets a partition of the entire set of vertices. If no other edge exists (that is, if every chord associated to one set of vertices intersect every chord associated to the other set of vertices) then its homotopy type is that of two points, i.e., $I(G) \sim_h S^0$. Any deviation from this situation forces additional edges in the independence complex which, hence, must connect the two cliques and therefore $I(G)$ is connected.  
    \qed
\end{proof}

In particular, Proposition~\ref{disconnected} implies the following statements.

\begin{corollary}
    There is no bipartite graph with independence complex homotopic to a wedge of a sphere $S^i$ and $S^0$, with $i \geq 0$.
\end{corollary}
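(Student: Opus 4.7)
The plan is to deduce this directly from Proposition~\ref{disconnected}. A wedge $S^i \vee S^0$ is obtained from $S^i$ by identifying the basepoint with one of the two points of $S^0$, leaving the other point of $S^0$ as an extra isolated component. Hence $S^i \vee S^0$ is homotopy equivalent to $S^i \sqcup \{\mathrm{pt}\}$, which is always disconnected.

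By Proposition~\ref{disconnected}, the independence complex of a bipartite (circle) graph is either connected or has the homotopy type of $S^0$. So if some bipartite circle graph $G$ satisfied $I(G)\simeq S^i\vee S^0$, then in particular $S^i\vee S^0$ would be homotopy equivalent to $S^0$. It therefore suffices to rule this out for every $i\geq 0$.

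For $i\geq 1$ the space $S^i\vee S^0\simeq S^i\sqcup\{\mathrm{pt}\}$ has exactly two path components, but one of them is $S^i$, which has nontrivial reduced homology in degree $i$; meanwhile both components of $S^0$ are contractible, so the two spaces are not homotopy equivalent. For $i=0$, the space $S^0\vee S^0\simeq \{\mathrm{pt},\mathrm{pt},\mathrm{pt}\}$ has three path components whereas $S^0$ has only two, so again the homotopy types differ.

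In every case $S^i\vee S^0\not\simeq S^0$, and the corollary follows from Proposition~\ref{disconnected}. There is no real obstacle here; the only mild subtlety is to keep in mind that ``wedge of a sphere $S^i$ and $S^0$'' produces a disconnected space because one of the two points of $S^0$ remains detached, which is precisely what triggers Proposition~\ref{disconnected}.
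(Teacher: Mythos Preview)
Your proof is correct and follows exactly the approach the paper intends: the corollary is stated as an immediate consequence of Proposition~\ref{disconnected}, and you have simply spelled out why $S^i\vee S^0$ is disconnected and not homotopy equivalent to $S^0$. The only small remark is that Proposition~\ref{disconnected} is phrased for bipartite \emph{circle} graphs while the corollary says ``bipartite graph''; however, the proof of Proposition~\ref{disconnected} uses only bipartiteness, so this causes no trouble.
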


\begin{corollary}
    The independence complex of a bipartite circle graph does not contain\footnote{By abuse of language, we identify the simplicial complex with its geometric realization.} three pairwise disjoint vertices.
\end{corollary}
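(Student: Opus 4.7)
The plan is to read ``three pairwise disjoint vertices'' as three vertices of the independence complex $I(G)$ lying in three pairwise distinct path-connected components of its geometric realization. Once this interpretation is fixed, the corollary is immediate from Proposition~\ref{disconnected}, so my proof will be one short reduction.

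First I would invoke Proposition~\ref{disconnected}: the independence complex $I(G)$ of a bipartite circle graph is either homotopy equivalent to $S^0$ or path-connected. Since the number of path-connected components is a homotopy invariant, this means $I(G)$ has exactly two components in the first case and exactly one in the second; in particular, at most two in either case.

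Then I would conclude by noting that any three vertices of $I(G)$ must, by the pigeonhole principle, include two lying in the same component. Hence no configuration of three vertices sitting in three pairwise distinct components exists, which is exactly the content of the corollary.

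There is no genuine obstacle here: the only point worth being explicit about is the correct reading of ``pairwise disjoint'' (via the footnote identifying the simplicial complex with its geometric realization). Once that is settled, the corollary is a direct consequence of the connectivity dichotomy in Proposition~\ref{disconnected}.
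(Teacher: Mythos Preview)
Your reading of ``three pairwise disjoint vertices'' via the footnote is the intended one, and the paper likewise presents this corollary as an immediate consequence of Proposition~\ref{disconnected} without further argument. Your reduction (at most two components, hence no three vertices in three distinct components) is exactly the intended one-line deduction.
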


Moving away from isolated vertices, we also have the following observation.

\begin{theorem}
    \label{prop:dim1}
    There is no bipartite circle graph with independence complex homotopic to a wedge of circles with more than two $S^1$ components.
\end{theorem}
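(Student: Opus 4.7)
My plan is to reduce the claim to the fact that the bipartite graph $K_{4,4}$ minus a perfect matching is not a bipartite circle graph, and then to establish that fact by a small case analysis of trees with four edges.

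First, I would extract a suspension model of $I(G)$. Using the contractibility of the two bipartite simplices $\Delta_A, \Delta_B \subset I(G)$ (as in the proof of Proposition~\ref{disconnected}), collapsing $\Delta_B$ to a point identifies $I(G)$ with $\Delta_A \cup_{L} \mathrm{cone}(L)$, where
$$L \;=\; \bigcup_{b \in B} \Delta\bigl(A \setminus N(b)\bigr) \;=\; \{S \subseteq A : N_G(S) \neq B\}$$
is a simplicial complex on $A$. Both pieces are contractible, so $I(G) \simeq \Sigma L$, which is an explicit form of Jonsson's suspension result.

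Now, assuming $I(G) \simeq \bigvee_k S^1$ with $k \geq 3$, the shift $\tilde{H}_i(\Sigma L) \cong \tilde{H}_{i-1}(L)$ forces $L$ to have at least four connected components $C_1, C_2, C_3, C_4$. I would pick $a_i \in C_i$ and a chord $b_i \in B$ with $a_i \in A \setminus N(b_i)$; since the simplex $A \setminus N(b_i) \subseteq L$ is connected and contains $a_i$, it lies entirely in $C_i$. From this it follows easily that the $b_i$ are pairwise distinct and $a_i b_j \in E(G)$ iff $i \neq j$, since otherwise some $a_i$ would lie in two components at once. Hence the induced subgraph $G[\{a_1,\dots,a_4,b_1,\dots,b_4\}]$ is precisely $K_{4,4}$ with the perfect matching $\{a_i b_i\}_{i=1}^{4}$ removed; since induced subgraphs of bipartite circle graphs are again bipartite circle graphs, this subgraph must itself be a bipartite circle graph.

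The main obstacle is to show that $K_{4,4}$ minus a perfect matching is \emph{not} a bipartite circle graph. My plan is to consider any putative realization and analyze the planar tree $T_A$ dual to its four non-crossing $A$-chords: $T_A$ has five vertices and four edges, and each $B$-chord $b_j$ traces a path in $T_A$ whose edge set equals $\{a_k : k \neq j\}$, so four distinct length-three paths in $T_A$ are required, each avoiding a distinct single edge. However, a short inspection of the three isomorphism classes of trees with four edges---the path $P_5$, the $Y$-tree (obtained from $K_{1,3}$ by subdividing one edge), and the star $K_{1,4}$---shows that this is impossible: $P_5$ and the $Y$-tree admit only two length-three paths each (so only two edges of $T_A$ can be avoided), while $K_{1,4}$ admits none at all. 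This contradiction forces $k \leq 2$.
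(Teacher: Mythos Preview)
Your argument is correct, and it parallels the paper's proof in outline while differing in execution at both ends.  In the paper, the reduction step is carried out by analysing $4$-cycles $\langle l,a,r,b\rangle$ in $I(G)$ directly: one shows by hand that any $1$-cycle can be shortened to such a square with $a$ and $b$ lying in different connected components of the ``complement graph'' $C(G)$ (the $1$-skeleton of $I(G)\setminus\{\Delta_L,\Delta_R\}$), and infers that three independent circles force four such components.  Your suspension model $I(G)\simeq\Sigma L$ with $L=\{S\subseteq A:N_G(S)\neq B\}$ gives the same conclusion in one stroke via $\tilde H_0(L)\cong\tilde H_1(I(G))$; the components of $L$ coincide with the $A$-parts of the edge-containing components of $C(G)$, so the two reductions are equivalent but yours is cleaner.

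At the obstruction end, both proofs isolate the same eight vertices and the same induced subgraph $K_{4,4}$ minus a perfect matching (equivalently, the $3$-cube $Q_3$).  The paper then argues ad hoc with chords: among $w_b,w_c,w_d$ (all crossing $v_a$) one, say $w_c$, separates the other two, and then $v_c$ must meet both $w_b$ and $w_d$ while avoiding $w_c$, a contradiction.  Your dual-tree argument is a more systematic version of the same phenomenon --- the non-crossing $A$-chords determine a tree on five regions, a $B$-chord records a path in that tree, and no tree on four edges admits four length-three paths each avoiding a distinct edge.  This packaging has the advantage of making clear why the bound ``at most two $S^1$ summands'' is sharp and of suggesting how one might push the method to higher-dimensional wedges.
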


\begin{proof} \smartqed
    We start with a couple of useful definitions. Let $G$ be a bipartite circle graph, with partition of the vertex set $V(G) = A \sqcup B$. The independence complex $I(G)$ of $G$  has two cliques spanned by the independent sets $A$ and $B$. We denote the clique spanned by $A$ and $B$ the {\em left} and {\em right} {\em cliques} $\Delta_R$ and $\Delta_L$, respectively. 
  Moreover, we call the $1$-skeleton of $I(G) \setminus \{ \Delta_R, \Delta_L \}$ (the independence complex minus the two large cliques) the {\em complement graph} $C(G)$.

  \medskip

  We claim that every $S^1$ component in $I(G)$ can be represented by a cycle of $4$-edges $\langle l,a,r,b \rangle$, where $l$ is an edge in $\Delta_L$, $r$ is an edge in $\Delta_r$, and $a,b$ are edges in distinct connected components of the complement graph $C(G)$.

  To see this, note that every sequence of edges in $\Delta_L$ or $\Delta_R$ can be shortened to a single edge, since $\Delta_L$ and $\Delta_R$ are cliques. Moreover, any sequence of two edges in the complement graph must span a triangle in the independence complex $I(G)$ and hence retract to a single edge in either $\Delta_L$ or $\Delta_R$. 
  
  Finally, $a$ and $b$ must be in separate connected components of $C(G)$. For, suppose they are not, then there is a sequence of edges in $C(G)$ connecting endpoints of $a$ and $b$. But this means that any consecutive pair of edges in this path span a triangle in $I(G)$ and hence $\langle l,a,r,b \rangle$ cannot generate homology.

  \medskip

  It now follows that any bipartite circle graph with independence complex homotopic to a wedge of circles with at least three $S^1$ components must have at least four connected components in its complement graph. We prove now that this is not possible.
  
  \medskip

  Assume that $G$ is a bipartite circle graph with complement graph $C(G)$, and assume that $C(G)$ has at least four connected components (each containing at least one edge). Pick four edges $a,b,c,d$ from four distinct connected components of $C(G)$. Call their endpoints $v_a,v_b,v_c,v_d \in \Delta_L$ and $w_a,w_b,w_c,w_d \in \Delta_R$ respectively.

  Recall that every vertex in $C(G)$ corresponds to a chord in the chord diagram realising $G$. The chord associated to $v_a$ must intersect the chords associated to $w_b$, $w_c$ and $w_d$. One of them, say, the one associated to $w_c$ must separate the ones associated to $w_b$ and $w_d$. Now, the chord associated to $v_c$ must intersect the chords associated to $w_b$ and $w_d$, but must be disjoint from the chord associated to $w_c$ (see Figure \ref{fig:cc} for an illustration), this is a contradiction to the chord associated to $w_c$ separating the chords associated to $w_b$ and $w_d$. Therefore, such a graph $G$ does not exist, and the statement holds.
  \qed 
\end{proof}
 \begin{figure}[ht]
      \centering
    $ \vcenter{\hbox{\begin{overpic}[scale=.5]{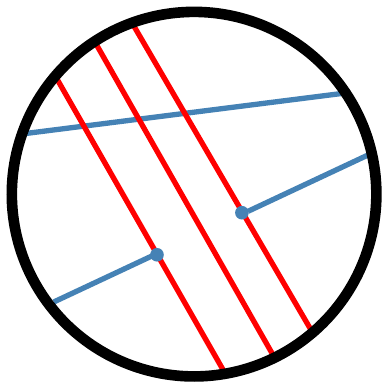}
       \put(70, 65){$v_a$}
        \put(40, 30){$v_c$}
        \put(50, 39){$v_c$}
        \put(48, -5){$w_b$}
         \put(65, 0){$w_c$}
          \put(77, 7){$w_d$}
\end{overpic}}}$
      \caption{A chord diagram illustrating last part of the proof of Proposition \ref{prop:dim1}.}
      \label{fig:cc}
  \end{figure}

Note that Theorem \ref{prop:dim1} is {\em best possible}: chord diagram in Figure~\ref{fig:s1wedges1} gives rise to an hexagon, which is bipartite, and whose independence complex has the homotopy type of $S^1 \vee S^1$.

\begin{remark}
    The complement graph $C(G)$ from the proof of Theorem~\ref{prop:dim1} can be used to give an alternative proof of \cite[Theorem 1]{Jonsson}: Slicing through $C(G)$ away from the endpoints of its edges yields a (possibly empty) polyhedral complex $P$. Since every vertex of the independence complex $I(G)$ is a vertex of $C(G)$, and lies in one of two cliques to the left and right of $P$, it follows that the homotopy type of the independence complex is $\Sigma P$.
\end{remark}

\section{An example: the case of pretzel knot $P(3,4,5,-5)$}
\label{sec:examples}

Inspired by results of \cite{3pretzels} we analyze (real) extreme Khovanov homology of pretzel knot $K=P(3,4,5,-5)$. The standard diagram $D$ of $K$, shown in Figure \ref{fig:P(3,4,5,-5)}, is $A$-adequate, meaning that the associated chord diagram $\mathcal{C}_D$ contains no chord with both endpoints in the same circle, and therefore the associated circle graph $G(D)$ is empty. By convention, we set $I(D)$ to be homotopy equivalent to $S^{-1}$, that we call sphere of dimension -1, and declare to satisfy $\Sigma S^{-1} = S^0$. 

By definition of the gradings in Khovanov homology, it holds that for any diagram $D$ we can compute $j_{min}(D) = p - 2n - k$, where $p$ and $n$ correspond to the number of positive and negative crossings of $D$, and $k$ is the number of circles in the associated chord diagram obtained when smoothing every crossing with an $A$-label. Hence, we get $j_{min}(D) = 12 - 2 \cdot 5 - 7 = -5$, and therefore 
$$Kh^{i, -5}(P(3,4,5,-5)) \simeq H^{i-1+5}(S^{-1}) = \left\{ \begin{array}{ll}
         \mathbb{Z} & \mbox{if $i = -5$};\\
 0 & \mbox{otherwise}.\end{array} \right.$$ 

Diagram $D$ was quite particular, since it was $A$-adequate. We devote the rest of this section to show a more illustrative example on how to compute extreme Khovanov homology by using the theory of independence complexes. 

\begin{figure}[ht]
    \centering
    \begin{subfigure}{.48\textwidth}
      \centering
    \includegraphics[scale=0.4]{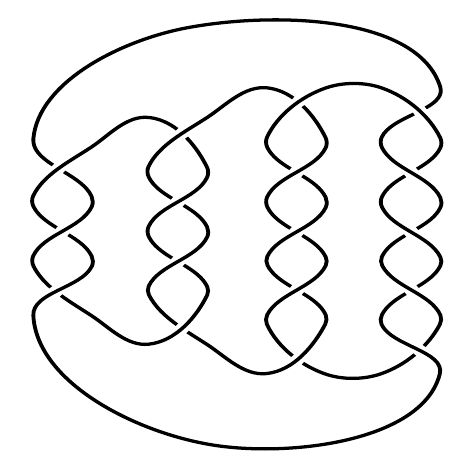}
    \subcaption{Diagram $D$ of $P(3,4,5,-5)$.}
    \label{fig:P(3,4,5,-5)}
    \end{subfigure} \quad
\begin{subfigure}{.48\textwidth}
\centering
    \includegraphics[scale=0.4]{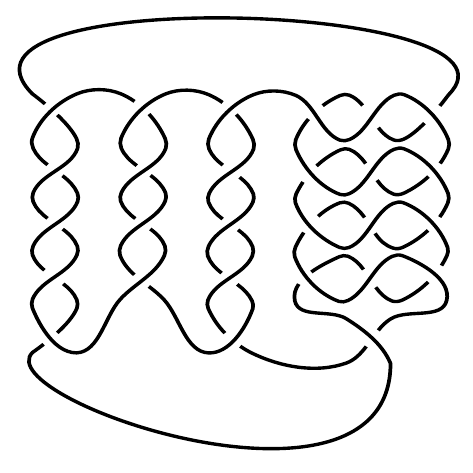}
 \subcaption{Diagram $D'$ of $P(3,4,5,-5)$.}
    \label{fig:P(3,4,5,-5)R2}
    \end{subfigure}
    \caption{Two diagrams of $P(3,4,5,-5)$ related by Reidemeister II moves.}
\end{figure}

\begin{figure}[ht]
    \centering
    \begin{subfigure}{.49\textwidth}
    \centering
    \includegraphics[scale=0.1]{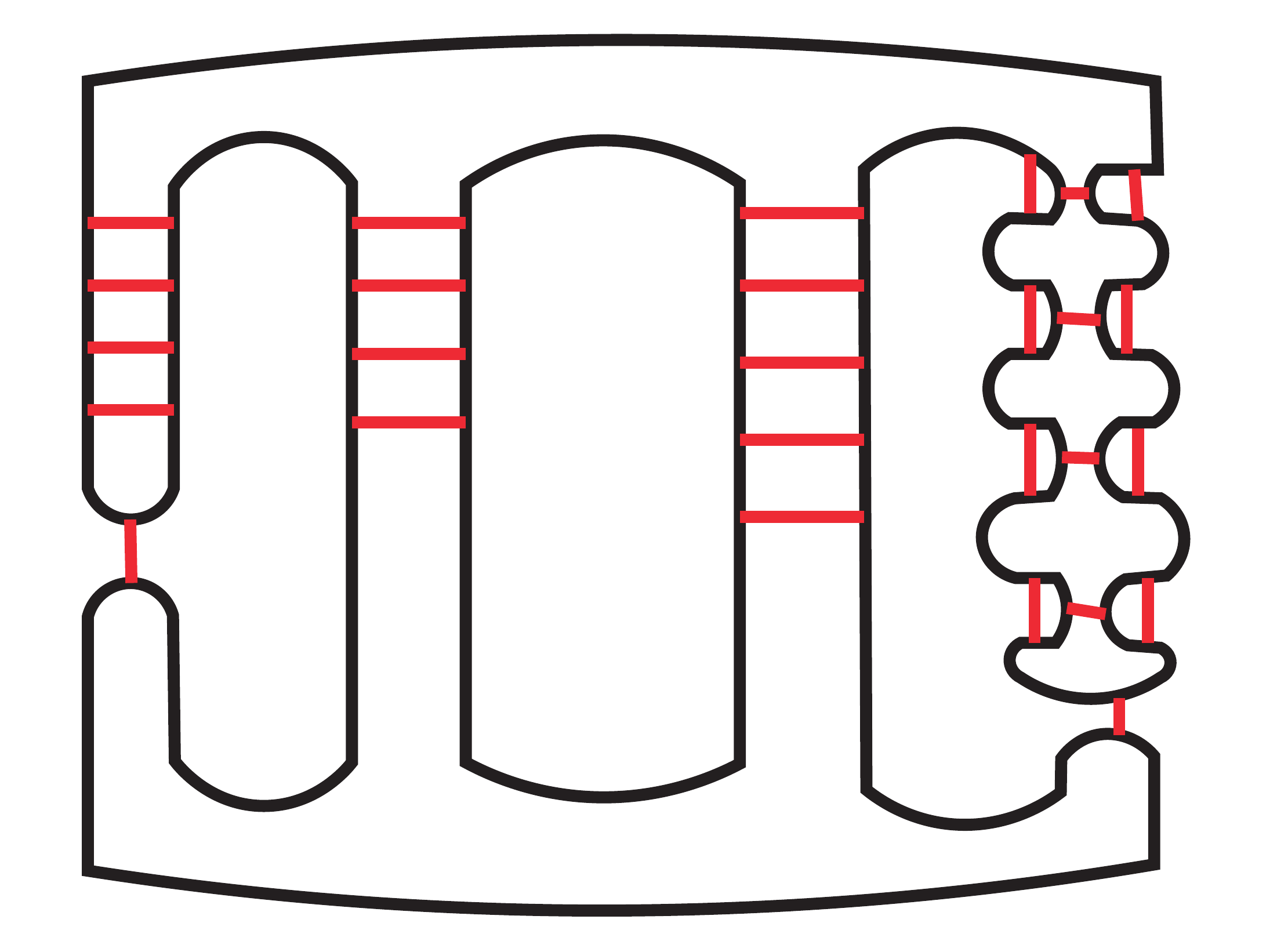}
    \subcaption{Chord diagram $\mathcal{C}_{D'}$.}
    \label{fig:P(3,4,5,-5)R2Asmoothing}
     \end{subfigure}
      \centering
    \begin{subfigure}{.49\textwidth}
    \centering
     $  \vcenter{\hbox{\begin{overpic}[scale=.2]{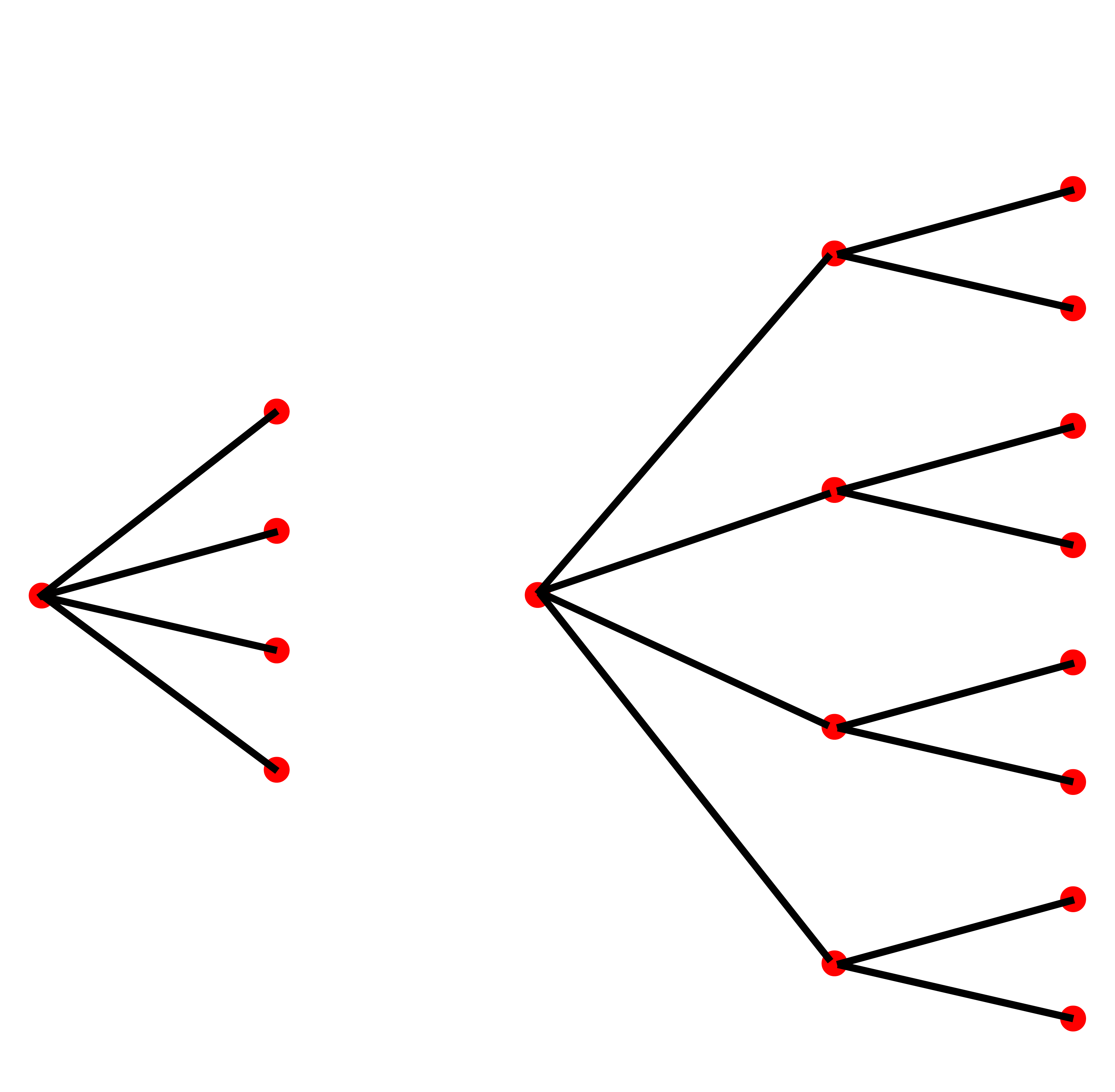}
      \put(30, 26){$4$}
      \put(30, 39){$3$}
      \put(30, 52){$2$}
      \put(30, 65){$1$}
      \put(-3, 45){$5$}
 \put(105, 5){$26$}
      \put(105, 16){$18$}
      \put(105, 27){$25$}
      \put(105, 39){$17$}
      \put(105, 49){$24$}
      \put(105, 62){$16$}
        \put(105, 72){$23$}
      \put(105, 85){$15$}
      \put(73, 83){$19$}
       \put(73, 60){$20$}
        \put(74, 39){$21$}
         \put(78, 16){$22$}
      \put(41, 45){$27$}
\end{overpic}}}$
    \subcaption{Circle graph $G(D')$.}
    \label{fig:P(3,4,5,-5)Landograph}
     \end{subfigure}
     \caption{The chord diagram and the Lando graph obtained from smoothing every crossing of $D'$ following an $A$-label.}
\end{figure}

Consider the diagram $D'$ of pretzel knot $P(3,4,5-5)$ shown in Figure \ref{fig:P(3,4,5,-5)R2}, obtained by performing some Reidemeister-$2$ moves to the standard diagram $D$. When we smooth every crossing following an $A$-label (as in Figure \ref{fig:smoothing}), we get a chord diagram containing a single circle and $26$ chords, as depicted in Figure \ref{fig:P(3,4,5,-5)R2Asmoothing}. The associated chord diagram $G(D)$ consists of the disjoint union of a star graph $G_1$ of $4$ rays and a binary tree $G_2$ with $8$ leaves, as shown in Figure \ref{fig:P(3,4,5,-5)Landograph}. Observe that vertices $1$ and $27$ dominate vertices $2$ and $15$ in the sense of Definition 3.1 in \cite{PrzytyckiSilvero18}. Therefore, Equation (\ref{join}) together with Domination Lemma \cite[Lemma 3.2]{PrzytyckiSilvero18} imply that
$$I(G) \sim_h I(G_1) \ast I(G_2) \sim_h \Sigma I(G_{\emptyset}) \ast \Sigma I(L_2 \sqcup L_2 \sqcup L_2) \sim_h \Sigma S^{-1} \ast \Sigma S^2 \sim_h S^4.$$

We can now compute $j_{min}(D') = 17 - 2 \cdot 10 - 2 = -5$, and as a consequence of Theorem \ref{TeoGMS} we get 
$$Kh^{i, -5}(P(3,4,5,-5)) \simeq H^{i-1+10}(S^4) = \left\{ \begin{array}{ll}
         \mathbb{Z} & \mbox{if $i = -5$};\\
 0 & \mbox{otherwise}.\end{array} \right.$$ 

Since the value of $j_{\min}$ coincides for both diagrams, computations for extreme Khovanov homology agree, as expected.

\bibliographystyle{spmpsci.bst}  
\bibliography{bib.bib} 

\begin{thebibliography}{10}
\providecommand{\url}[1]{{#1}}
\providecommand{\urlprefix}{URL }
\expandafter\ifx\csname urlstyle\endcsname\relax
  \providecommand{\doi}[1]{DOI~\discretionary{}{}{}#1}\else
  \providecommand{\doi}{DOI~\discretionary{}{}{}\begingroup
  \urlstyle{rm}\Url}\fi

\bibitem{BaHuSi}
Baldwin, J., Hu, Y., Sivek, S.: Khovanov homology and the cinquefoil.
\newblock J. Eur. Math. Soc.  (2024)

\bibitem{BaDoLeLiSa}
Baldwin, J.A., Dowlin, N., Levine, A.S., Lidman, T., Sazdanovic, R.: Khovanov
  homology detects the figure-eight knot.
\newblock Bull. Lond. Math. Soc. \textbf{53}(3), 871--876 (2021)

\bibitem{BaSi}
Baldwin, J.A., Sivek, S.: Khovanov homology detects the trefoils.
\newblock Duke Math. J. \textbf{171}(4), 885--956 (2022)

\bibitem{Barmak}
Barmak, J.A.: Star clusters in independence complexes of graphs.
\newblock Adv. Math. \textbf{241}, 33--57 (2013)

\bibitem{Csorba09}
Csorba, P.: Subdivision yields alexander duality on independence com- plexes.
\newblock Electr. J. Combin. \textbf{16}(2), Research paper 11 (2009)

\bibitem{GMS}
Gonz\'{a}lez-Meneses, J., Manch\'{o}n, P.M.G., Silvero, M.: A geometric
  description of the extreme {K}hovanov cohomology.
\newblock Proc. Roy. Soc. Edinburgh Sect. A \textbf{148}, 541--557 (2018)

\bibitem{HatcherAlgTop}
Hatcher, A.: Algebraic Topology.
\newblock Cambridge University Press, Cambridge (2002)

\bibitem{Jonsson}
Jonsson, J.: On the topology of independence complexes of triangle-free graphs.
\newblock Preprint (2011).
\newblock \urlprefix\url{https://people.kth.se/
  jakobj/doc/preprints/indbip.pdf}

\bibitem{Kho1}
Khovanov, M.: A categorification of the {J}ones polynomial.
\newblock Duke Math. J. \textbf{101}(3), 359–426 (2000)

\bibitem{Kozlov}
Kozlov, D.N.: Complexes of directed trees.
\newblock J. Combin. Theory Ser. A \textbf{88}(1), 112--122 (1999)

\bibitem{KrMr}
Kronheimer, P.B., Mrowka, T.S.: Khovanov homology is an unknot-detector.
\newblock Publ. Math. Inst. Hautes Études Sci. (113), 97--208 (2011)

\bibitem{Nagel-Reiner}
Nagel, U., Reiner, V.: Betti numbers of monomial ideals and shifted skew
  shapes.
\newblock Electron. J. Combin. \textbf{16}(2), Research Paper 3, 59 (2009)

\bibitem{3pretzels}
Oh, J., Siggers, M.H., Yang, S.Y., Yun, H.: On geometric realizations of
  extreme {K}hovanov homology of pretzel links.
\newblock arXiv:2401.06487 (2024)

\bibitem{PrzytyckiSilvero18}
Przytycki, J.H., Silvero, M.: Homotopy type of circle graph complexes motivated
  by extreme {K}hovanov homology.
\newblock Journal of Algebraic Combinatorics \textbf{48}, 119--156 (2018)

\bibitem{PrzytyckiSilvero24}
Przytycki, J.H., Silvero, M.: Khovanov homology, wedges of spheres and
  complexity.
\newblock RACSAM \textbf{118}(102) (2024)

\end{thebibliography}

\end{document}